\def\one{\mathbf1}
\def\zero{\mathbf0}
\def\rk{\text{rank}}
\providecommand{\meet}{\mathbin{\wedge}}
\providecommand{\join}{\mathbin{\vee}}
\newcommand{\card}[1]{\left| #1\right|}
\newcommand{\comp}[1]{\overline{#1}}
     \def\restrict{\hbox{\rm\kern0.166em\accent"12\kern-0.536em$\vert$\kern0.3em}}%
     \def\restrict{\upharpoonright}%
\def\twoSet#1#2{\left\{%
\vphantom{#2}#1\thinspace\right|\nolinebreak[3]\left.%
  #2%
  \vphantom{#1}%
  \right\}%
}
\def\oneSet#1{\left\lbrace#1\right\rbrace}
\newif\if@nstr
\def\setstrfalse{\let\if@nstr=\iffalse}
\def\setstrtrue{\let\if@nstr=\iftrue}
\def\@nstr #1#2{
\def\@@nstr ##1#1##2##3\@@nstr{\ifx
\@nstr ##2\setstrfalse \else \setstrtrue \fi }
\@@nstr #2#1\@nstr \@@nstr}
\def\@separate#1|#2@{\setFront{#1}\setBack{#2}}
\def\lb#1\rb{\@nstr|{#1} \if@nstr \@separate#1 @ \twoSet{\@setFront}{\@setBack}%
\else \@separate |{#1 }@ \oneSet{\@setBack}\fi%
}
\def\setFront#1{\def\@setFront{#1}}
\def\setBack#1{\def\@setBack{#1}}
\def\Set#1{\lb{#1}\rb}
\def\oneBrk#1{\left\langle#1\right\rangle}
\def\twoBrk#1#2{\left\langle%
\vphantom{#2}#1\thinspace\right|\nolinebreak[3]\left.%
  #2%
  \vphantom{#1}%
  \right\rangle%
}
\def\brk<#1>{\@nstr|{#1} \if@nstr \@separate#1 @ \twoBrk{\@setFront}{\@setBack}%
\else \@separate |{#1 }@ \oneBrk{\@setBack}\fi%
}
\def\lemref#1{\normalfont{lemma}~\ref{#1}}
\theoremstyle{plain}
\newtheorem{thm}{Theorem}[section]
\newtheorem{lem}[thm]{Lemma}
\newtheorem{cor}[thm]{Corollary}
\newtheorem{defn}[thm]{Definition}
\theoremstyle{remark}
{}
{}
{}
{}
\title{The M\"obius Function on Implication sublattices of a Boolean 
algebra}
\author{Colin G.Bailey}
\address{School of Mathematics,  Statistics \& Operations Research\\
Victoria University of Wellington\\
PO Box 600\\
Wellington\\
NEW ZEALAND
}
\email{Colin.Bailey@vuw.ac.nz}
\author{Joseph S.Oliveira}
\address{
Pacific Northwest National Laboratories\\
Richland\\
U.S.A.}
\email{Joseph.Oliveira@pnl.gov}
\date{2009,  February 3}
\subjclass{06A07, 06E99}
\keywords{Boolean algebra,  implication algebra, sublattice}
\begin{document}
\begin{abstract}
	Let $B$ be a finite Boolean algebra. Let $\mathcal A$ be the partial order 
	of all implication sublattices of $B$. We will compute the M\"obius 
	function on $\mathcal A$ in two different ways. 
\end{abstract}
\maketitle
\section{Introduction}

Let $B$ be a finite Boolean algebra.  
\begin{defn}\label{def:implsub}
	An \emph{implication subalgebra} of $B$ is a subset closed under $\to$. 
	
	An \emph{implication sublattice} of $B$ is a  subset closed under 
	$\to$  and $\wedge$. 
\end{defn}

Implication algebras are developed in \cite{Abb:bk}.
In this paper we wish to examine the poset of implication sublattices 
of a finite Boolean algebra. 
By the general theory of implication algebras we know that an 
implication sublattice is exactly a Boolean subalgebra of $[a, 1]$ for 
some $a\in B$,  and so we are really considering certain partial 
partitions of the atoms of $B$. 

% This gives rise to the \emph{type} of 
% an implication sublattice. 
% 
% \begin{defn}\label{def:typeImplSA}
%     Let $A$ be an implication sublattice of $B$. 
%     
%     $\text{CoAt}(A)$ is the set of coatoms of $A$. 
%     $$
%     \pi_{A}=\Set{{\rsf C_{a}=\Set{c\in\text{CoAt}(B) | c\geq a}} | 
%     a\in\text{CoAt}(B)}. 
%     $$
%     
%     Let $t_{i}=\card{\Set{a\in\text{CoAt}(A) | \card{\rsf C_{a}}=i}}$. 
%     The \emph{type of $A$} is the sequence 
%     $$
%     \text{tp}(A)=\brk<t_{i} | 1\le i\le n>. 
%     $$
% \end{defn}
% 
% Notice that 
% $d=\sum_{i=1}^{n}t_{i}$ is the {dimension} of $A$ , ie the 
% number of coatoms of $A$ and 
% $k=\sum_{i=1}^{n}it_{i}$ is the corank of $\min(A)$. 

Let $\mathcal A$ be the partial order of all implication sublattices 
of $B$ ordered by inclusion. Of course $\mathcal A$ is a finite lattice. 

Our interest is in understanding the M\"obius function of the poset 
$\mathcal A$. We consider two methods of finding it. Both methods use 
a closure operators that provide two ways of closing an implication 
sublattice to a Boolean subalgebra. 

\section{Method One}

We wish to compute the M\"obius function on $\mathcal A$. So let 
$A_{1}$ and $A_{2}$ be two implication sublattices of $B$. Since 
$A_{2}$ is a Boolean algebra and $A_{1}$ is an implication sublattice 
of $A_{2}$ we may assume that $A_{2}=B$. Let $A=A_{1}$ and $a=\min A$. 

First we define a closure operator on $\mathcal A$. 

\begin{defn}\label{def:closure}
	Let $A\in\mathcal A$. Let 
	$A^{c}=\Set{\comp x| x\in A}$. 
	
	Let $\comp A=A\cup A^{c}$. 
\end{defn}

\begin{lem}\label{lem:Bool}
	If $A\in\mathcal A$ then $\comp A$ is a Boolean subalgebra of $B$. 
\end{lem}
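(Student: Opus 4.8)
The plan is to verify that $\comp A=A\cup A^{c}$ is closed under $\wedge$, $\vee$ and complementation and contains $0$ and $1$; since $\comp A\subseteq B$, this is exactly the assertion that $\comp A$ is a Boolean subalgebra. Throughout I would set $a=\min A$ and lean on the structural fact recalled in the introduction: $A$ is a Boolean subalgebra of the interval $[a,1]$. Concretely, identifying $B$ with the algebra of subsets of its set of atoms, $A$ is the family of joins $a\vee\bigcup_{i\in I}P_{i}$ for a fixed partition $P_{1},\dots,P_{k}$ of the atoms not below $a$; then $A^{c}$ is precisely the family of unions $\bigcup_{i\in I}P_{i}$, so $\comp A$ ought to be the subalgebra generated by $a$ together with the blocks $P_{1},\dots,P_{k}$. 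That is the identity I ultimately want, but the actual work is to establish closure operation by operation, coordinate-free.

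The essential device is the relative complement. From $A$ being a subalgebra of $[a,1]$ it follows that $A$ contains $a$ and $1$, is closed under $\wedge$ (as an implication sublattice) and under $\vee$ (joins in $[a,1]$ coincide with joins in $B$), and that for each $x\in A$ the element $x^{*}=a\vee\comp x$, the complement of $x$ inside $[a,1]$, again lies in $A$. With this in hand the pure cases are routine: for $x,y\in A$ the De Morgan laws give $\comp x\wedge\comp y=\comp{x\vee y}$ and $\comp x\vee\comp y=\comp{x\wedge y}$, so $A^{c}$ is closed under $\wedge$ and $\vee$; complementation sends $A$ into $A^{c}$ by definition and $A^{c}$ back into $A$ since $\comp{\comp x}=x$; and $1\in A$ forces $0=\comp 1\in A^{c}$, so both bounds are present.

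The real work, and the step I expect to be the main obstacle, is the mixed cases, where one argument is drawn from $A$ and the other from $A^{c}$; this is where $x^{*}$ earns its keep. For $x,y\in A$ I would compute $x\vee\comp y=x\vee(a\vee\comp y)=x\vee y^{*}$, which lies in $A$ because $x,y^{*}\in A$ and $A$ is $\vee$-closed. Dually, taking complements, $x\wedge\comp y=\comp{\comp x\vee y}=\comp{x^{*}\vee y}$, and since $x^{*}\vee y\in A$ this element lies in $A^{c}$. Combined with the pure cases, these two identities show that every meet and every join of two members of $\comp A$ falls back into $\comp A$, which completes the verification.

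Finally I would record that the degenerate possibility $a=0$ (where $[a,1]=B$, so $A$ is already a subalgebra and $A^{c}=A$) needs no separate argument: the displayed identities hold verbatim, $a=0$ merely collapsing $x^{*}$ to $\comp x$. I would therefore conclude that $\comp A$ is closed under all the Boolean operations and contains $0$ and $1$, hence is a Boolean subalgebra of $B$, which also confirms that $A\mapsto\comp A$ is the first of the two closure operators promised in the introduction.
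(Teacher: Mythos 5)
Your proof is correct, and its overall shape --- check $0$, $1$, and complements, then handle joins and meets by splitting into the pure cases ($x,y\in A$ or $x,y\in A^{c}$) and the mixed case --- is exactly the paper's. The only substantive difference is how the mixed case is justified. The paper stays entirely inside the definition of an implication sublattice: for $x\in A$ and $y\in A^{c}$ it observes $x\join y=\comp{y}\to x$ with $\comp{y}\in A$, so $\to$-closure finishes the job in one line (and meets then come for free from De Morgan plus complement-closure, which is why the paper only checks joins). You instead invoke the structural characterization from the introduction --- that $A$ is a Boolean subalgebra of $[a,1]$ --- to obtain the relative complement $y^{*}=a\join\comp{y}\in A$ and then compute $x\join\comp{y}=x\join y^{*}$. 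The two computations land on the same element, since $x\join y^{*}=x\join a\join\comp{y}=x\join\comp{y}=y\to x$ once $x\geq a$; so your argument is sound, but it leans on an external fact (cited to Abbott and not proved in the paper) where the paper needs only the identity $u\to v=\comp{u}\join v$ together with the hypothesis of $\to$-closure. What your route buys is a concrete picture of $\comp{A}$ as the subalgebra with atoms $a,P_{1},\dots,P_{k}$, which is genuinely illuminating for the later counting arguments; what the paper's route buys is self-containedness and brevity. If you replace the appeal to $y^{*}$ by the observation that $x\join\comp{y}=y\to x$, your proof collapses to the paper's.
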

\begin{proof}
	Clearly $\comp A$ is closed under complements. As $1\in A$ we have 
	$1\in\comp A$ and $0\in\comp A$. It suffices to show closure under 
	joins. 
	
	If $x, y\in A$ then $x\join y\in A$ and $x\meet y\in A$ as $A$ is an 
	implication lattice. Thus if $x, y\in A^{c}$ then 
	$x\join y=\comp{\comp x\meet\comp y}\in A^{c}$. 
	
	If $x\in A$ and $y\in A^{c}$ then $x\join y= x\join \comp{\comp y}=
	\comp y\to x\in A$ as $\comp y\in A$ and $A$ is $\to$-closed. 
\end{proof}

\begin{lem}\label{lem:clAlg}
	Let $A\in\mathcal A$. Then $\comp A=A$ iff $A$ is a Boolean 
	subalgebra of $B$. 
\end{lem}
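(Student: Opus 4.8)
The plan is to prove the two implications separately, and to observe at the outset that the forward direction is essentially already in hand. For the direction ``$\comp A=A\implies A$ is a Boolean subalgebra,'' I would simply invoke \lemref{lem:Bool}: that lemma asserts $\comp A$ is always a Boolean subalgebra of $B$, so under the hypothesis $\comp A=A$ we get that $A$ itself equals a Boolean subalgebra, and there is nothing further to check. This shows half the equivalence with no real work.

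For the converse, ``$A$ a Boolean subalgebra $\implies\comp A=A$,'' I would argue directly from the definition of $\comp A=A\cup A^{c}$. The only thing one must verify is $A^{c}\subseteq A$, since the reverse inclusion $A\subseteq\comp A$ is immediate. Here I would use that a Boolean subalgebra of $B$ is by definition closed under the complementation of $B$: for each $x\in A$ we have $\comp x\in A$, hence every element of $A^{c}=\Set{\comp x| x\in A}$ lies in $A$. Therefore $A^{c}\subseteq A$ and $\comp A=A\cup A^{c}=A$.

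There is no genuine obstacle in this lemma; it is a bookkeeping consequence of \lemref{lem:Bool} together with the defining closure property of a Boolean subalgebra. The one point worth stating carefully is that the complement $\comp x$ appearing in the definition of $A^{c}$ is taken in the full algebra $B$, which is exactly the complement with respect to which a Boolean subalgebra of $B$ is required to be closed; this alignment is what makes the converse inclusion $A^{c}\subseteq A$ hold and prevents any mismatch with the relative complement inside an interval $[a,1]$.
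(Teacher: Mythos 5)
Your proof is correct and follows essentially the same route as the paper: the forward direction is read off from \lemref{lem:Bool}, and the converse reduces to $A^{c}\subseteq A$ via closure of a Boolean subalgebra under complementation. Your added remark that the complement in $A^{c}$ is taken in $B$ (not relative to an interval $[a,1]$) is a useful clarification the paper leaves implicit, but it does not change the argument.
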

\begin{proof}
	By the last lemma $\comp A$ is a Boolean subalgebra. 
	
	If $A$ is a Boolean subalgebra then $A^{c}\subseteq A$ so that
	$\comp A=A$. 
\end{proof}

\begin{lem}\label{lem:closure}
	$A\mapsto\comp A$ is a closure operator on $\mathcal A$. 
\end{lem}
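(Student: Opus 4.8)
The plan is to verify the three defining properties of a closure operator for the map $A\mapsto\comp A$ on $\mathcal A$: that it is \emph{extensive} ($A\subseteq\comp A$), \emph{monotone} ($A_{1}\subseteq A_{2}$ implies $\comp{A_{1}}\subseteq\comp{A_{2}}$), and \emph{idempotent} ($\comp{\comp A}=\comp A$). Before checking these I would first confirm that the map actually sends $\mathcal A$ into itself: by \lemref{lem:Bool} the set $\comp A$ is a Boolean subalgebra of $B$, and every Boolean subalgebra is in particular closed under $\to$ and $\meet$, so $\comp A\in\mathcal A$ and the map is well defined as an operator on the poset.

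Extensivity is immediate from the definition, since $\comp A=A\cup A^{c}\supseteq A$. For monotonicity, the key observation is that although complementation reverses the order on individual elements, the set-level operation $A\mapsto A^{c}=\Set{\comp x| x\in A}$ is order-preserving: if $A_{1}\subseteq A_{2}$, then every $\comp x$ with $x\in A_{1}$ already lies in $A_{2}^{c}$, whence $A_{1}^{c}\subseteq A_{2}^{c}$. Combining this with $A_{1}\subseteq A_{2}$ and taking unions gives $\comp{A_{1}}=A_{1}\cup A_{1}^{c}\subseteq A_{2}\cup A_{2}^{c}=\comp{A_{2}}$.

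The only property that genuinely uses the earlier work is idempotence, and this is where I expect the substance of the argument to lie. By \lemref{lem:Bool}, $\comp A$ is a Boolean subalgebra of $B$; applying \lemref{lem:clAlg} with $\comp A$ in place of $A$ then yields $\comp{\comp A}=\comp A$ directly. Thus all three axioms hold and the map is a closure operator. I do not anticipate any serious obstacle, since the two preceding lemmas have already isolated exactly the facts needed---closure of $\comp A$ under complementation, and the characterization of fixed points as Boolean subalgebras---so that idempotence falls out at once. The one point worth stating carefully is the monotonicity step, in order to avoid the tempting but incorrect claim that complementation is antitone at the level of the closure map; at the level of the set $A^{c}$ it is in fact monotone.
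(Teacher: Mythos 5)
Your proof is correct and follows essentially the same route as the paper: extensivity from the definition, and idempotence via \lemref{lem:Bool} and \lemref{lem:clAlg}. The only difference is that you explicitly verify monotonicity (and well-definedness of the map into $\mathcal A$), which the paper leaves unstated; your observation that $A\mapsto A^{c}$ is monotone at the level of sets is a worthwhile clarification but not a different argument.
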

\begin{proof}
	Clearly $A\subseteq\comp A$. As $\comp A$ is a Boolean subalgebra we 
	have (by the last lemma) $\comp{\comp A}=\comp A$. 
\end{proof}

Now we recall the closure theorem for M\"obius functions -- see 
\cite{INC} Proposition 2.1.19. 
\begin{thm}\label{thm:closure}
	Let $X$ be a locally finite partial order and $x\mapsto \comp x$ be 
	a closure operator on $X$. Let $\comp X$ be the suborder of all 
	closed elements of $X$ and $y$, $z$ be in $X$. Then 
	$$
		\sum_{\comp x=\comp z}\mu(y, x)=
		\begin{cases}
			\mu_{\comp X}(y, \comp z) & \text{ if }y\in\comp X  \\
			0 & \text{ otherwise. }
		\end{cases}
	$$
\end{thm}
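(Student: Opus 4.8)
The plan is to exploit the defining recurrence of the M\"obius function in $X$ together with a single structural fact about closure operators, and then to finish by a M\"obius inversion carried out inside $\comp X$. Throughout I write the three closure axioms as $x\le\comp x$, as $x\le w\Rightarrow\comp x\le\comp w$, and as $\comp{\comp x}=\comp x$. The key step is the observation that for any \emph{closed} element $t$ (one with $t=\comp t$) and any $x\in X$ one has $x\le t\iff\comp x\le t$: the forward direction follows from monotonicity and idempotence, since $\comp x\le\comp t=t$, and the reverse from extensivity, since $x\le\comp x\le t$. This lets me convert a condition on $\comp x$ into a condition on $x$, and it is exactly this conversion that links the closure fibres to intervals of $X$.

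Next I fix $y$ and set $f(c)=\sum_{\comp x=c}\mu(y,x)$ for each closed $c$, so that the left-hand side of the theorem is precisely $f(\comp z)$; I also recall that $\mu(y,x)=0$ unless $y\le x$. Since the fibres $\Set{x| \comp x=c}$ partition $X$ as $c$ ranges over $\comp X$, summing $f$ over all closed $c\le t$ collects exactly those $x$ with $\comp x\le t$, which by the key step is the same as the condition $x\le t$. Hence for every closed $t$,
\[
	\sum_{\substack{c\in\comp X\\ c\le t}}f(c)\;=\;\sum_{y\le x\le t}\mu(y,x)\;=\;\delta(y,t),
\]
the last equality being the defining property of $\mu$ in the locally finite poset $X$. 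I note in addition that $f(c)=0$ whenever $c\not\ge\comp y$, because $\comp x=c$ together with $y\le x$ forces $c\ge\comp y$; thus the displayed sum really ranges over the interval $[\comp y,t]$ of $\comp X$, which is contained in the finite interval $[\comp y,t]$ of $X$, so that $\mu_{\comp X}$ is well defined.

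The argument then concludes by M\"obius inversion in $\comp X$. If $y\in\comp X$ then $y=\comp y$, and $\delta(y,t)$ is the delta function supported at $y$ among the closed elements; inverting the relation $\sum_{y\le c\le t}f(c)=\delta(y,t)$ gives $f(t)=\mu_{\comp X}(y,t)$, and evaluating at $t=\comp z$ yields the first case. If $y\notin\comp X$ then $t\ne y$ for every closed $t$, so $\delta(y,t)\equiv 0$; inverting this relation (equivalently, inducting upward starting from $t=\comp y$) forces $f(c)=0$ for all closed $c$, and evaluating at $c=\comp z$ yields the second case.

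The one point that needs care is the alignment of the index sets: I must be sure that restricting to $x\le t$ captures exactly the union of the fibres over closed $c\le t$, and that the vanishing $\mu(y,x)=0$ for $x\not\ge y$ matches the support of $f$ on $\Set{c| c\ge\comp y}$. Both of these are exactly what the key step and the remark on $\comp y$ supply, so once they are in place the computation and the final inversion are routine.
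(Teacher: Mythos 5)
Your proof is correct. Note, however, that the paper does not actually prove this theorem: it is quoted from Spiegel and O'Donnell (Proposition 2.1.19 of \emph{Incidence Algebras}) and the ``proof'' is just the citation, so there is no in-paper argument to compare yours against. What you have written is essentially the standard proof of Rota's closure (Galois connection) theorem, and all the delicate points are handled: the equivalence $x\le t\iff\comp x\le t$ for closed $t$ is exactly what identifies the union of the fibres over $\Set{c\in\comp X| c\le t}$ with the interval $[y,t]$ of $X$ once the vanishing of $\mu(y,x)$ for $x\not\geq y$ is taken into account; the support observation $f(c)=0$ for $c\not\geq\comp y$ keeps every sum inside a finite interval, so both the rearrangement and the definition of $\mu_{\comp X}$ are legitimate; and the two cases ($y$ closed versus $y$ not closed) follow by M\"obius inversion, respectively induction, in $\comp X$. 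The only point you leave implicit is the subcase $y\not\le\comp z$ in the first case, where both sides vanish by the usual convention $\mu_{\comp X}(y,\comp z)=0$ on an empty interval together with your support remark; this is harmless. Supplying this self-contained argument is arguably an improvement on the paper, which leaves the reader to chase the reference.
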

\begin{proof}
	 See \cite{INC}. 
\end{proof}

\begin{lem}\label{lem:closEqB}
	Let $C\in\mathcal A$. Then $\comp C=B$ iff $C=B$ or $C$ is an 
	ultrafilter of $B$. 
\end{lem}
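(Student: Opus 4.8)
The plan is to treat the two directions separately, leaning on the description recalled in the introduction of an implication sublattice $C$ as a Boolean subalgebra of $[a,1]$ with $a=\min C$; throughout write $n$ for the number of atoms of $B$, so that $\card B=2^{n}$, and recall that the ultrafilters of the finite algebra $B$ are exactly the principal filters $[p,1]$ generated by atoms $p$.

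For the easy direction, if $C=B$ then $\comp C=B$ is immediate. If $C=[p,1]$ for an atom $p$, I would first note that $C^{c}=[0,\comp p]$, since $x\geq p$ iff $\comp x\leq\comp p$. As $p$ is an atom, every $z\in B$ satisfies either $p\leq z$, so $z\in[p,1]$, or $p\meet z=0$ and hence $z\leq\comp p$, so $z\in[0,\comp p]$; thus $\comp C=[p,1]\cup[0,\comp p]=B$.

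For the converse, assume $\comp C=B$ and $C\neq B$, and set $a=\min C$. First I would dispose of the case $a=0$: then $[a,1]=B$, so $C$ is itself a Boolean subalgebra of $B$, and \lemref{lem:clAlg} gives $\comp C=C$; with $\comp C=B$ this forces $C=B$, against our assumption. So $a\neq0$. The key observation is then that $C$ and $C^{c}$ are disjoint: any $x\in C\cap C^{c}$ would satisfy $a\leq x$ (as $x\in C$) and $x=\comp y\leq\comp a$ for some $y\in C$, whence $a\leq\comp a$ and therefore $a=a\meet\comp a=0$, a contradiction. Since $x\mapsto\comp x$ is a bijection of $C$ onto $C^{c}$, disjointness yields $2^{n}=\card B=\card C+\card{C^{c}}=2\,\card C$, so $\card C=2^{n-1}$.

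To finish I would compare this with the size of the ambient interval. If $a$ lies above exactly $m$ atoms of $B$, then $\card{[a,1]}=2^{\,n-m}$, and $a\neq0$ gives $m\geq1$. From $C\subseteq[a,1]$ we obtain $2^{n-1}=\card C\leq 2^{\,n-m}$, which forces $m\leq1$ and hence $m=1$; that is, $a$ is itself an atom. Then $\card{[a,1]}=2^{n-1}=\card C$ together with $C\subseteq[a,1]$ gives $C=[a,1]$, the principal ultrafilter generated by the atom $a$, as required. I expect the only real content to be the disjointness step together with the correct bookkeeping of cardinalities through the $[a,1]$-description; the remaining verifications are routine.
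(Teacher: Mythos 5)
Your argument is correct, but it is a genuinely different proof from the one in the paper. The paper argues order-theoretically and without counting: assuming $\comp C=B$ and $C\neq B$, it shows $C$ is upwards-closed by a direct implication computation (if $b\le x<1$ with $b\in C$, $x\notin C$, then $\comp x\in C$ forces $\comp x\to b=x\in C$, a contradiction), concludes that $C$ is a filter, and then reads off the ultrafilter property from $C\cup C^{c}=B$; the easy direction is likewise immediate from the abstract ultrafilter property ($x\in C$ or $\comp x\in C$ for every $x$). You instead lean on the structure theorem that $C$ is a Boolean subalgebra of $[a,1]$ with $a=\min C$, prove the disjointness $C\cap C^{c}=\emptyset$ when $a>0$, and extract $\card C=2^{n-1}$ from $\card B=2\card C$, which pins down $a$ as an atom and $C=[a,1]$ by comparing with $\card{[a,1]}=2^{n-m}$. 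Your route is clean and all steps check out (including the necessary case split on $a=0$, where the disjointness argument would fail, and the identification of ultrafilters of a finite Boolean algebra with the principal filters over atoms); its cost is a heavier reliance on finiteness and on the $[a,1]$-representation of implication sublattices, whereas the paper's filter argument isolates the algebraic role of $\to$-closure and would survive in settings where cardinality counting is unavailable. Both proofs are acceptable here since $B$ is finite throughout the paper.
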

\begin{proof}
	Suppose that $C$ is an ultrafilter. Then for any $x\in B$ we have 
	$x\in C$ or $x\in C^{c}$ so that $x\in\comp C$. Thus $\comp C=B$. 
	
	Suppose that $\comp C=B$ and $C\not=B$. We first show that 
	$C$ is upwards-closed. Indeed, if not, then there is some 
	$x\in B$ and $b\le x<1$ with $b\in C$ and $x\notin C$. As 
	$\comp C=B$ we have $\comp x\in C$ so that $\comp x\to b\in C$. 
	But $\comp x\to b= \comp{\comp x}\join b= x\join b= x$ -- 
	contradiction. 
	
	Thus $C$ is upwards-closed, and meet and join-closed, so $C$ is a 
	filter. As $C\not=B$ we know that $0\notin C$. Also 
	$C\cup C^{c}=B$ so that for all $x\in C$ either $x\in C$ or $\comp 
	x\in C$. Thus $C$ is an ultrafilter. 
\end{proof}

This lemma together with the closure theorem allow us to use an 
induction argument to compute the M\"obius function. The induction 
comes from the following lemma. 

\begin{lem}\label{lem:ufsAbove}
	Let $A\in\mathcal A$ and $a=\min A$. Let $c_{1}$ and $c_{2}$ be any 
	atoms below $a$. Then 
	$$
		\bigl[A, [c_{1}, 1]\bigr]_{\mathcal A}\simeq\bigl[A, [c_{2}, 1]\bigr]_{\mathcal A}. 
	$$
\end{lem}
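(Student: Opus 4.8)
The plan is to exhibit an order-automorphism of the whole lattice $\mathcal A$ that carries one interval onto the other. The natural candidate is the map induced by transposing the two atoms $c_{1}$ and $c_{2}$.

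First I would define $\phi\colon B\to B$ to be the Boolean algebra automorphism determined by the permutation of the atoms of $B$ that swaps $c_{1}$ and $c_{2}$ and fixes every other atom; concretely $\phi(x)$ is the join of the images under this transposition of the atoms lying below $x$. Since $\phi$ is an automorphism of $B$ it preserves $\meet$, $\join$ and complementation, and hence preserves $\to$ because $x\to y=\comp x\join y$. Consequently $\phi$ sends implication sublattices to implication sublattices and respects inclusion, so it induces an order-automorphism of $\mathcal A$.

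Next I would check the two facts that make $\phi$ do what we want. For the fixing of $A$: since $a=\min A$ every $x\in A$ satisfies $a\le x$, and since $c_{1}$ and $c_{2}$ are atoms below $a$ both lie below $x$; an element of $B$ dominating both $c_{1}$ and $c_{2}$ is left unchanged by the transposition, so $\phi(x)=x$ for all $x\in A$ and therefore $\phi(A)=A$. For the endpoints: as $\phi$ is an order-isomorphism of $B$ with $\phi(c_{1})=c_{2}$ and $\phi(1)=1$, it carries the principal filter $[c_{1},1]$ onto $[c_{2},1]$.

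Finally, because $\phi$ is an automorphism of $\mathcal A$ with $\phi(A)=A$ and $\phi\bigl([c_{1},1]\bigr)=[c_{2},1]$, its restriction maps the interval $\bigl[A,[c_{1},1]\bigr]_{\mathcal A}$ bijectively and order-isomorphically onto $\bigl[A,[c_{2},1]\bigr]_{\mathcal A}$, giving the claimed isomorphism. The only step needing genuine care is the verification that $\phi$ fixes $A$ pointwise, and even this is immediate once one observes that every element of $A$ dominates both $c_{1}$ and $c_{2}$, a direct consequence of $a$ being the minimum of $A$; so I do not anticipate any real obstacle.
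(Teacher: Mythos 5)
Your proposal is correct and is essentially the paper's own argument: both extend the transposition of the atoms $c_{1}$ and $c_{2}$ to an automorphism of $B$, hence of $\mathcal A$, and observe that it carries one interval onto the other. You supply the one detail the paper leaves implicit (that the automorphism fixes $A$ pointwise because every element of $A$ lies above $a$ and hence above both atoms), which is exactly the right point to check.
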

\begin{proof}
	Let $\tau$ be the permutation of the atoms of $B$ that exchanges 
	$c_{1}$ and $c_{2}$. Then $\tau$ induces an automorphism of $B$ and 
	that induces an automorphism of $\mathcal A$. It is clear that this 
	induces the desired isomorphism between 
	$\bigl[A, [c_{1}, 1]\bigr]_{\mathcal A}$ and $\bigl[A, [c_{2}, 1]\bigr]_{\mathcal A}$. 
\end{proof}

Now suppose that $A\in\mathcal A$ and $a=\min A>0$. Then we have 
$$
	\mu(A, B)+\sum_{
	\substack{c\le a\\
	c\text{ a }B\text{-atom}}}\mu(A, [c, 1])=0
$$
by the closure theorem and \lemref{lem:closEqB}. 
Thus 
\begin{align*}
	\mu(A, B) & =-\sum_{\substack{c\le a\\
	c\text{ a }B\text{-atom}}}\mu(A, [c, 1])  \\
	 & =-\card a \mu(A, [c, 1])\\
	 \intertext{where $c$ is any $B$-atom below $a$. $\card a$ is the rank of $a$ in $B$ 
	 and equals the number of atoms below $a$. As we now have a reduction 
	 in rank (of $a$ in $[c, 1]$) we see that we can proceed inductively to 
	 get }
	 =-1^{\card a}\card a!\mu(A, [a, 1]). 
\end{align*}

So we are left with the case that $A$ is in fact a Boolean subalgebra 
of $B$. We note that in this case, if $C\in[A, B]_{\mathcal A}$ then 
$C$ is also a Boolean subalgebra. We also note that any subalgebra is 
determined by its set of atoms and these form a partition of $n$. 
So the lattice of subalgebras of the Boolean algebra $2^{n}$ is isomorphic
to the lattice of partitions of $n$ and the M\"obius function of this 
is well known. This gives us the final result that 
$$
\mu(A, B)=(-1)^{\card a+\text{w}(A)-\text{w}(B)}\card 
a!\prod_{\substack{c\text{ is an }\\
A\text{-atom}}}({\card c-\card a-1})! 
$$
where $\text{w}(A)$ is the number of atoms of $A$ and $\card c$ is the 
$B$-rank of $c$. 

\section{Method Two}
Consider any implication sublattice $A$ of $B$. Then $A$ is a Boolean 
subalgebra of $[a=\min A, \one]$. This means we can take any extension 
$A\subseteq C\subseteq B$ and factor $C$ into $\brk<[a, \one]\cap C, [0,  
a]\cap C>$ and this pairing completely determines $C$.  

It follows that the interval $[A, B]$ is isomorphic to a product of 
two partial orders:
\begin{align*}
	P_{1} & =\Set{C | C\text{ is a Boolean subalgebra of }[a, 1]\text{ containing 
	}[a, 1]\cap A }  \\
	P_{2} & =\Set{C | C\text{ is an implication sublattice of }[0, a]}. 
\end{align*}
$P_{1}$ is known as a partition lattice. $P_{2}$ is essentially the 
same as the interval we are considering with the assumption that 
$A=\Set{\one}$. 

So we will compute $\mu(\Set{\one}, B)$. 

\begin{defn}\label{def:upClose}
	Let $C$ be any implication sublattice of $B$. Then 
	$$
		C\uparrow=\Set{x | \exists c\in C\ x\geq c}
	$$
	is the upwards-closure of $C$. 
	Note that $C\uparrow=[\min c, \one]$. 
\end{defn}

\begin{lem}\label{lem:upClose}
	$C\mapsto C\uparrow$ is a closure operator and $C\uparrow=B$ iff 
	$\min C=0$. 
\end{lem}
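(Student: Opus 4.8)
The plan is to reduce both assertions to the identity $C\uparrow=[\min C,\one]$ already recorded in \defref{def:upClose}, from which everything follows with little effort. First I would verify that $\min C$ genuinely exists and lies in $C$: since an implication sublattice is closed under $\meet$ and $B$ is finite, the element $a:=\bigwedge C$ belongs to $C$ and is its least element. For the identity, if $x\in C\uparrow$ then $x\geq c$ for some $c\in C$, and since $c\geq a$ we get $x\geq a$, so $C\uparrow\subseteq[a,\one]$; conversely any $x\geq a$ witnesses $x\in C\uparrow$ by taking the bound $c=a\in C$, giving $[a,\one]\subseteq C\uparrow$. Thus $C\uparrow=[a,\one]$, which is an interval of $B$ and hence a Boolean subalgebra, so in particular $C\uparrow\in\mathcal A$ and the map does land in the poset we are working over.

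Next I would check the three defining properties of a closure operator. Extensivity $C\subseteq C\uparrow$ is immediate, as each $c\in C$ satisfies $c\geq c$. For monotonicity, if $C\subseteq D$ then $\min D=\bigwedge D\leq\bigwedge C=\min C$, whence $C\uparrow=[\min C,\one]\subseteq[\min D,\one]=D\uparrow$. For idempotence, observe that $\min(C\uparrow)=\min[\min C,\one]=\min C$, so applying the identity twice yields $(C\uparrow)\uparrow=[\min C,\one]=C\uparrow$.

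Finally, for the stated equivalence I would again invoke the identity: $C\uparrow=[\min C,\one]$ equals $B=[0,\one]$ precisely when $\min C=0$, and both directions of the iff are then immediate.

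The only step that requires any genuine care is the first one, namely confirming that $\min C$ really is an element of $C$ and that the upward closure collapses exactly to the interval above it; this is a direct consequence of $\meet$-closure together with the finiteness of $B$. Everything after that is a routine formal verification, so I do not anticipate a substantive obstacle beyond getting this identity cleanly in place.
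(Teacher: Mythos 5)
Your proof is correct and fills in exactly the details the paper leaves out (its entire proof is ``This is immediate''), using the same identity $C\uparrow=[\min C,\one]$ that the paper records in \defref{def:upClose}. No substantive difference in approach.
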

\begin{proof}
	This is immediate. 
\end{proof}

It follows from this lemma that $C\uparrow=B$ iff $C$ is a Boolean 
subalgebra of $B$. 

\begin{lem}\label{lem:isoUp}
	Let $C_{1}$ and $C_{2}$ be two Boolean subalgebras of $B$. Then 
	$$
		[\Set{\one}, C_{1}]\simeq [\Set{\one}, C_{2}]\text{ iff 
		}C_{1}\simeq C_{2}. 
	$$
\end{lem}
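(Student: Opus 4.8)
The plan is to prove the two directions separately. The forward implication is routine, so the real content is the reverse implication, where I must recover $C$ up to isomorphism from the mere order-type of the interval $[\Set{\one},C]$.

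For the ``if'' direction, suppose $\varphi\colon C_{1}\to C_{2}$ is an isomorphism of Boolean algebras. Since both $\to$ and $\meet$ are term-definable from the Boolean operations, $\varphi$ preserves them; hence $\varphi$ carries each implication sublattice of $C_{1}$ to an implication sublattice of $C_{2}$, and $\varphi^{-1}$ does the reverse. As $\varphi$ is inclusion-preserving in both directions, the assignment $A\mapsto\varphi[A]$ is an order isomorphism of $[\Set{\one},C_{1}]$ onto $[\Set{\one},C_{2}]$.

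For the ``only if'' direction, I would use the classification of finite Boolean algebras: $C_{1}$ and $C_{2}$ are isomorphic exactly when they have the same cardinality (equivalently, the same number of atoms). So it suffices to read off $\card{C_{i}}$ from the order structure of $[\Set{\one},C_{i}]$, and I propose to do this by counting the atoms of the interval, i.e. the implication sublattices covering the bottom element $\Set{\one}$. The key claim is that these atoms are precisely the two-element sets $\Set{\one,x}$ with $x\in C$ and $x\neq\one$. Indeed, for any such $x$ every relevant value lands back in $\Set{\one,x}$ (note $x\meet x=x$, $\one\to x=x$, $x\to\one=\one$, and $x\to x=\one$), so $\Set{\one,x}$ is an implication sublattice; conversely any implication sublattice $A\supsetneq\Set{\one}$ contains some $x\neq\one$ and hence contains $\Set{\one,x}$, so the minimal ones are exactly these. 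There are $\card{C}-1$ admissible values of $x$, so $[\Set{\one},C]$ has exactly $\card{C}-1$ atoms. An order isomorphism preserves the number of atoms, so $[\Set{\one},C_{1}]\simeq[\Set{\one},C_{2}]$ gives $\card{C_{1}}=\card{C_{2}}$, whence $C_{1}\simeq C_{2}$.

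I do not expect a deep obstacle here once the counting invariant is fixed; the argument is essentially forced by the classification of finite Boolean algebras. The only point requiring care is the verification of the atom claim: checking that $\Set{\one,x}$ is genuinely $\to$-closed for \emph{every} $x\neq\one$, and that no implication sublattice strictly between $\Set{\one}$ and such a two-element set can exist, so that the atom count is exactly $\card{C}-1$ and not something coarser.
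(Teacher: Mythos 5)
Your proof is correct, and for the substantive direction it uses a genuinely different invariant from the paper's. Both arguments reduce to the classification of finite Boolean algebras, but the paper recovers the rank of $C_{i}$ from the \emph{length of a maximal chain} in $[\Set{\one}, C_{i}]$: a maximal chain $1>s_{i1}>\dots>s_{ij_{i}}=0$ in $C_{i}$ gives the chain of principal filters $[s_{ij}, \one]$, each step adding one atom, so the interval has a maximal chain of length $j_{i}$ and hence $j_{1}=j_{2}$. You instead recover $\card{C_{i}}$ from the \emph{number of atoms} of the interval, verifying that these are exactly the two-element implication sublattices $\Set{\one, x}$ with $x\in C_{i}\setminus\Set{\one}$, of which there are $\card{C_{i}}-1$; your verification of this atom claim is complete and correct. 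Each approach buys something: the paper's chain argument generalizes naturally to reading off rank and connects to the gradedness of $\mathcal A$ used elsewhere, but as written it implicitly needs either that all maximal chains in the interval have the same length or that one takes the longest chain (repairable, since cardinalities of implication sublattices are powers of $2$ and strictly increase along a chain, so every chain has length at most $j_{i}$); your atom count is an unambiguous order invariant preserved by any isomorphism of bounded posets, so it sidesteps that subtlety entirely. Your forward direction, via term-definability of $\to$ and $\meet$ from the Boolean operations, is a correct filling-in of what the paper dismisses as clear.
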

\begin{proof}
	The left-to-right direction is clear. 
	
	Conversely, if $1>s_{i1}>s_{i2}>\dots>s_{ij_{i}}=0$ is a maximal 
	chain in $C_{i}$ then 
	the set $\Set{[s_{ij}, \one] | 1\le j\le j_{i}}$ is a maximal chain 
	in $[\Set{\one}, C_{i}]$ -- since 
	$[s_{i(j+1)}, \one]$ has one more atom than $[s_{ij}, \one]$. 
	
	Thus $j_{1}=j_{2}$ and so $C_{1}\simeq C_{2}$. 
\end{proof}

We recall that there are $S_{n, k}$ Boolean subalgebras of $B$ that 
have $k$ atoms -- here $n$ is the number of atoms that $B$ has and 
$S_{n, k}$ is a Stirling number of the second kind, counting the 
number of partitions of $n$ into $k$ pieces. 

Let $C_{k}$ be any Boolean subalgebra of $B$ with $k$ atoms. 
We can now apply the lemma and the closure theorem to see that 
\begin{align*}
	\mu(\Set{\one}, B) 
	&=-\sum_{\substack{C\uparrow=B\\
	C\not=B}}\mu(\Set{\one}, C) \\
	&=-\sum_{n>k\geq 1}S_{n, k}\mu(\Set{\one}, C_{k})  \\
	 & =\sum_{\substack{\Gamma\text{ a chain in }[1, n]\\
	 \Gamma=n=n_{0}>n_{1}>\dots>n_{p}=1}}(-1)^{p}S_{n_{0}, 
	 n_{1}}\cdot\dots\cdot S_{n_{p-1}, n_{p}}\mu(\Set{\one}, C_{1}) \\
	 &  =\sum_{\substack{\Gamma\text{ a chain in }[1, n]\\
	 \Gamma=n=n_{0}>n_{1}>\dots>n_{p}=1}}(-1)^{p+1}S_{n_{0}, 
	 n_{1}}\cdot\dots\cdot S_{n_{p-1}, n_{p}} 
\end{align*}

\subsection{An Identity}
We can put these two methods together to see that 
$$
\mu(\Set{\one}, B)=(-1)^{n}n!=\sum_{\substack{\Gamma\text{ a chain in }[1, n]\\
	 \Gamma=n=n_{0}>n_{1}>\dots>n_{p}=1}}(-1)^{p+1}S_{n_{0}, 
	 n_{1}}\cdot\dots\cdot S_{n_{p-1}, n_{p}}. 
$$

\subsection{Conclusion \& Beyond}
We see fro the above results that the poset $\mathcal A$ is close to 
the poset of partitions of a set. The analysis we've undertaken shows 
this in two distinct ways -- via the closure operators. In future 
work we plan to apply these results to an analysis of the subalgebras 
of cubic implication algebras.

\begin{bibdiv}
\begin{biblist}
\bib{Abb:bk}{book}{
author={Abbott,J.C.}, 
title={Sets, Lattices, and Boolean Algebras}, 
publisher={Allyn and Bacon, Boston, MA}, 
date={1969}
}

\bib{INC}{book}{  
author={Spiegel, E}, 
author={O'Donnell, C.J.}, 
title={Incidence Algebras}, 
publisher={Marcel Dekker Inc.}, 
date={1997}
}
\end{biblist}
\end{bibdiv}

\end{document}